\def \build#1#2#3{\mathrel{\mathop{#1}\limits^{#2}_{#3}}}
\newcommand{\dd}{{\mbox{\boldmath $d$}}}
\newtheorem{theorem}{Theorem}[section]
\newtheorem{remark}{Remark}[section]
\newtheorem{corollary}{Corollary}[section]
\newtheorem{example}{Example}[section]
\newtheorem{lemma}{Lemma}[section]
\def\dd{\mathrm{\,d}}
\def\pp{\mathbb{P}}
\def\rr{\mathbb{R}}
\def\nn{\mathbb{N}}
\title{\bf A model for risk assessment of a large earthquake with application to Chilean data}
\author{Ra\'ul Fierro$^{1,2}$\thanks{Corresponding author: Ra\'ul Fierro, Instituto de Matem\'atica, Universidad de Valpara\'{\i}so, Valpara\'{\i}so, Chile. Emails: raul.fierro@pucv.cl; raul.fierro@uv.cl; rafipra@gmail.com}\,\,\,and V\'ictor Leiva $^3$\\
{\footnotesize $^{1}$Instituto de Matem\'aticas, Universidad de Valpara\'{\i}so, Chile}\\
{\footnotesize $^{2}$Instituto de Matem\'aticas, Pontificia Universidad Cat\'{o}lica de Valpara\'{\i}so, Chile}\\
\author{V\'ictor Leiva}
{\footnotesize $^{3}$Facultad de Ingenier\'ia y Ciencias, Universidad Adolfo Ib\'a\~nez, Chile}}
\date{}
\begin{document} \maketitle

\noindent{\bf Abstract. }  We study the asymptotic distribution for the occurrence time of the next large earthquake, by knowing the last large seismic event occurred a long time ago. We prove that, under reasonable conditions, such a distribution is asymptotically exponential with a rate depending on the asymptotic slope of the cumulative intensity function corresponding to a non-homogeneous Poisson process. Moreover, as it is not possible to obtain an empirical cumulative distribution function for the waiting time of the next large earthquake, a random cumulative function based on existing data is stated.  We demonstrate that analogous results to the theorems of Glivenko-Cantelli and Kolmogorov are satisfied by this random cumulative function. We conduct a simulation study for detecting in what scenario the approximate distribution of the studied elapsed time performs well. Finally, a real-world data analysis is carried out to illustrate the potential applications of our proposal.

\noindent{\bf Keywords}: data analysis; gamma distribution; maximum likelihood method; Monte Carlo simulation; non-homogeneous Poisson process.



\section{Introduction}

\label{Section1} A deep discussion on how possible is to predict an earthquake is given by \cite{Ka97}, but an answer to this question is still open. Throughout the time, some principles has been taken into account to predict the occurrence of earthquakes. Three laws that support these principles are (i) \cite{Om95}'s law, that sets the rate at which aftershocks occur immediately after a large earthquake (main shock); (ii) the elastic rebound law by \cite{Re10}, which indicates that an earthquake must have involved an elastic rebound due to accumulated stress; and (iii) \cite{GR44}'s law, which establishes a relationship between the magnitude and total number of earthquakes.

Time-predictable and slip-predictable models arise from the elastic rebound law and predict the time and slip of the next earthquake, respectively. On the one hand, the time-predictable model assumes that a critical threshold exists, which is constant over time, and once it is attained, there an earthquake should occur. On the other hand, the slip-predictable model assumes that a constant minimum stress is present and all the stress accumulated since the last earthquake, over this minimum, is released in the next seismic event. For details about these models, see \cite{PSSKP11}, \cite{REBKLS12} and \cite{SN80}.  Other authors, such as \cite{REBKLS12}, argue that  a ``memoryless'' earthquake model with fixed inter-event time or fixed slip is better than time-and-slip-predictable models for earthquake occurrence. Based on the slip-predictable model, the next earthquake should have a high magnitude, if the last seismic event occurred a long time ago. For this reason, it is important to know, as accuracy as possible, the distribution of the occurrence time of the next large earthquake, mainly in the case that the last seismic event occurred a long time ago. This is the main motivation for studying the distribution of the waiting time for the next large earthquake, by knowing that a long time has elapsed since the last seismic event.

The main objective of this work is to establish the limit distribution of the occurrence time of the next large earthquake, for a specific geological zone and given that a long time has elapsed from the last large earthquake. We prove that, under reasonable conditions, this distribution is exponential with rate depending on the asymptotic slope of the cumulative intensity function (CIF) of a non-homogeneous Poisson process (NHPP). Our assumptions are quite simple and basically consist of supposing that between the last large earthquake and the next one, $k-1$ ($k\geq2$) seismic events of minor intensity occur. Of course, it is not possible to know this parameter $k$ and any characteristic, of the distribution for the waiting time for the next large earthquake, only should be based on the information provided by the events of minor intensity. We prove that the limit distribution of the waiting time for the next large earthquake does not depend on $k$, whenever a long time is elapsed from the last large earthquake, and it is exponential with a parameter depending on the asymptotic slope of the CIF. An estimator for this parameter is provided using existing data from the last earthquake. Since it is not possible to obtain an empirical cumulative distribution function (CDF) for the waiting time of the next large earthquake, this estimator is used to state a random CDF, which is proved that satisfies analogous results to the known theorems of Glivenko-Cantelli and Kolmogorov.

A number of statistical tools have been used in studying seismic activity; see, for example, \cite{kklc:14}, \cite{fsi:15} and \cite{kr:15}. Other works on this matter based on point processes are attributed to \cite{Og88} and \cite{ac:15}. Indeed, \cite{Og88} modeled seismic activity by means of a Hawkes process and, recently, \cite{Fi15} and \cite{FLM15} introduced variants of the Hawkes process, which could be more appropriate for modelling earthquakes. Also in \cite{FLRS13}, the asymptotic distribution of a shock model based on a nonhomogeneous Poisson process could be adapted for this purpose. However,
we appreciate  the methodology that we are introducing in this paper due to its simplicity.

The paper is organized as follows. In Section \ref{section2}, we provide some notations and facts on which the results in later sections rely. In Section \ref{section3}, we state conditions for the existence of the limit CDF. In Section \ref{section4}, we study an estimator for the asymptotic slope of the CIF associated with the NHPP and its asymptotic properties. In Section \ref{section5}, we prove that this estimator is of maximum likelihood (ML), and based on it, a random CDF is defined. Moreover, we state and demonstrate some properties of this estimator for the limit CDF. In Section \ref{section6}, we carry out a simulation study for detecting which is the suitable scenario for the proposed approximate distribution performs well. In Section \ref{section7}, we conduct a real-world data analysis to illustrate the potential applications of our proposal. Finally, in Section \ref{section8}, we provide some conclusions about this study.

\section{Preliminaries}\label{section2}

According to \citet[][p.\,700]{WSSM84}: ``A correct representation of seismic hazard due to a fault must take into account the time elapsed since the most recent rupture''. From this quotation, the main concern for assessing risk of large earthquakes should be focused on determining the probability that the rupture time $T$ of a fault should occur during the next $h$ years, conditional to $t$ years have elapsed since the last rupture. This conditional probability is given by
\begin{equation}\label{e1}
F_t(h) = \pp(t<T\leq t+h|T>t), \quad t > 0, h > 0.
\end{equation}
A number of authors, such as \cite{DN86}, \cite{Di88} and \cite{YTRR07}, have taken into account the probability expressed in \eqref{e1} for assessing seismic risk. We are also interested in this assessment, which from our point of view involves studying the probability that a large earthquake occurs whether a long time has elapsed since the last large earthquake. Consequently, when a long time has elapsed, a natural criterion for this assessment consists of assuming the time $S$ of the next large earthquake is a random variable with CDF $G$ satisfying
\begin{equation}\label{e2}
G(h)=\lim_{t\to\infty} F_t(h), \quad h > 0,
\end{equation}
where, of course, we have to assume that $G$ is a CDF.

A realistic model considers only distributions for $T$ such that $G$ corresponds to a non-degenerate distribution. This assumption seems to be quite reasonable, because if a long time has elapsed since the last large earthquake, one should not expect that the next event occurs right now. By assuming this non-degeneracy, we conclude that a number of distributions considered by some authors, such as Gumbel \citep{Co68,KK77}, lognormal \citep{NB87} and Weibull \citep{HM13,Ri76} distributions, it does not seem to be the appropriate distribution for times between seismic events, because each of them lead to degenerate distributions for $G$ given by \eqref{e2}. Otherwise, our assumption is not correct or, at least, it is contradicted with the use of these distributions.

A possible distribution for $T$ which satisfies the above requirements can be obtained as follows. Let $\{T_n\}_{n\in\nn}$ be an increasing sequence of stoping times corresponding to an NHPP with intensity function (IF) $\lambda:\rr_+\to\rr_+$ and a CIF $\Lambda:\rr_+\to\rr_+$ given by
\begin{equation}\label{e3}
\Lambda(t)=\int_0^t\lambda(u)\dd u.
\end{equation}
We propose a criterion based on the assumption that some earthquakes of medium and large intensity, in a specific geological zone, occur according to an NHPP with a CIF as given in \eqref{e3}, which can be estimated of different ways, as we see later. We are assuming the occurrence time of a large earthquake is given by a sum of random variables, which represents the times between the occurrence of consecutive earthquakes. By supposing the large earthquake exactly occurs at the $k$th shock, this random time is denoting by $T_k$, the $k$th jump time of the NHPP. However, an important difficulty could arise due to it is not possible to know the value of $k$, as mentioned. As we will see, our assumptions allow us to obtain a non-degenerate CDF as defined in \eqref{e2} for $S$, which does not depend on $k$.

The time $T_k$ turns out to have a kind of gamma distribution with parameters depending on $k$ and the  CIF $\Lambda$. Indeed, we obtain the probability density function (PDF) of $T_k$ as
\begin{equation}\label{e4}
    f_k(t)=\left\{
\begin{array}{ccl}
  \frac{\lambda(t)}{(k-1)!}\Lambda(t)^{k-1}\exp(-\Lambda(t))& \text{if} & t\geq0;\\
  0& \text{if} & t<0;
\end{array} \right.
\end{equation}
where $\lambda$ is the IF of the CIF defined by \eqref{e3}. In the sequel, we assume that $\lambda$ is continuous so that the derivative of $\Lambda$ coincides with $\lambda$. Moreover, we refer to the CDF $G$, defined in \eqref{e2}, as the limit CDF of $T$. In the next section, we calculate the limit CDF of $T_k$.

\section{On the limit cumulative distribution function}\label{section3}

When $T$ follows an exponential distribution, condition \eqref{e2} is trivially satisfied. However, it seems that it is not appropriate to assume a priori that the distribution of $T$ is exponential. The following theorem establishes that, under a mild condition, when the limit CDF of $T$ exists, this corresponds to the exponential distribution or to the degenerate distribution at zero.

\begin{theorem}\label{t1}
Suppose $T$ has a continuous PDF $f$, a limit CDF  $G$, which is continuously differentiable at zero from the right, and its derivative $G'(0) = m$. Then, for each $h\geq0$,
\begin{itemize}
  \item [(i)] $G(h)=1 - \lim_{t\to\infty}\frac{f(t+h)}{f(t)}$ and
  \item [(ii)] $ G(h)=1-\exp(-m h)$.
\end{itemize}
\end{theorem}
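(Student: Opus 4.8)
The plan is to express the conditional probability $F_t(h)$ in terms of the PDF $f$ and then take the limit as $t\to\infty$. First I would write, using \eqref{e1} and the definition of a CDF,
\[
F_t(h) = \frac{\pp(t < T \le t+h)}{\pp(T > t)} = \frac{\int_t^{t+h} f(u)\,\dd u}{\int_t^{\infty} f(u)\,\dd u}.
\]
Differentiating this in $h$ gives $F_t'(h) = f(t+h)/\int_t^\infty f(u)\,\dd u$, so $F_t$ is absolutely continuous with density $u \mapsto f(t+u)/\int_t^\infty f(v)\,\dd v$ on $[0,\infty)$. To get part (i), I would divide numerator and denominator of the displayed ratio for $F_t(h)$ by $f(t)$, obtaining
\[
1 - F_t(h) = \frac{\int_{t+h}^{\infty} f(u)\,\dd u}{\int_t^{\infty} f(u)\,\dd u} = \frac{\int_{0}^{\infty} f(t+h+s)\,\dd s}{\int_0^{\infty} f(t+s)\,\dd s},
\]
and I would like to pass the limit inside so that the right-hand side tends to $\lim_{t\to\infty} f(t+h)/f(t)$. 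The cleaner route to (i), however, is probably to use the already-assumed existence of $G$: since $G(h) = \lim_t F_t(h)$ exists and $1 - F_t(h) = \int_h^\infty F_t'(s)\,\dd s$ with $F_t'(s) = f(t+s)/f(t) \cdot f(t)/\int_t^\infty f$, one shows the ratio $f(t+h)/f(t)$ converges by relating consecutive values; alternatively, apply L'Hôpital-type reasoning or a monotone/dominated convergence argument to the integral quotient. I would argue that $1 - G(h) = \lim_{t\to\infty} f(t+h)/f(t)$, which is part (i).

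For part (ii), I would exploit the semigroup (multiplicativity) structure that (i) forces on $1-G$. Set $\psi(h) = 1 - G(h) = \lim_{t\to\infty} f(t+h)/f(t)$. For $h_1, h_2 \ge 0$,
\[
\psi(h_1 + h_2) = \lim_{t\to\infty} \frac{f(t + h_1 + h_2)}{f(t)} = \lim_{t\to\infty} \frac{f(t + h_1 + h_2)}{f(t + h_2)} \cdot \frac{f(t+h_2)}{f(t)} = \psi(h_1)\,\psi(h_2),
\]
where the splitting of the limit is justified because each factor converges (the first by (i) applied along the shifted sequence $t + h_2$). Thus $\psi$ satisfies the Cauchy multiplicative functional equation on $[0,\infty)$. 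Since $G$ is a CDF with $G(0) = 0$ we have $\psi(0) = 1$ and $0 \le \psi(h) \le 1$; monotonicity of $G$ makes $\psi$ monotone, hence measurable, so the only solutions are $\psi(h) = \exp(-mh)$ for some constant $m \ge 0$ (allowing $m = \infty$, i.e. $\psi \equiv 0$ for $h>0$, the degenerate case). Finally, the hypothesis that $G$ is differentiable at $0$ from the right with $G'(0) = m$ pins down the constant: $-\psi'(0) = G'(0) = m$, so $G(h) = 1 - \exp(-mh)$, which is (ii); when $m = 0$ this is the degenerate distribution at zero, consistent with the discussion preceding the theorem.

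The main obstacle I anticipate is the rigorous justification in part (i) that the limit of the ratio of tail integrals equals $\lim_{t\to\infty} f(t+h)/f(t)$, and in particular that this pointwise limit $\psi(h)$ of densities exists at all without extra regularity on $f$. One needs to interchange limit and integral in $1 - F_t(h) = \int_0^\infty f(t+h+s)\,\dd s \big/ \int_0^\infty f(t+s)\,\dd s$; the assumed existence of $G$ gives convergence of the ratio, but extracting convergence of the integrand ratio itself requires an argument — likely via the monotonicity/continuity of $G$ near $0$ together with a Scheffé- or Fatou-type estimate, or by first establishing the functional equation for the tail quotients $1-F_t$ in the limit and only then identifying the density. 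I would be careful to treat the degenerate case $m=\infty$ (equivalently $G$ jumping to $1$ at $0^+$) separately, since there $f(t+h)/f(t) \to 0$ for $h>0$ and the "differentiable at zero" hypothesis must be read as $m$ possibly being $+\infty$ or the statement restricted to finite $m$.
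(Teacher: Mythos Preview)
Your argument for (ii) is correct and takes a genuinely different route from the paper. The paper derives from (i) the increment identity
\[
\frac{G(h+\Delta h)-G(h)}{\Delta h}=\frac{G(\Delta h)(1-G(h))}{\Delta h},
\]
lets $\Delta h\to0$ to obtain the ODE $G'(h)=m(1-G(h))$, and solves it. You instead extract the multiplicative functional equation $\psi(h_1+h_2)=\psi(h_1)\psi(h_2)$ for $\psi=1-G$ directly and invoke the classification of its monotone solutions. Both proofs exploit the same underlying multiplicativity; the paper converts it into a differential equation using the differentiability hypothesis, while yours uses the hypothesis only at the end to identify the rate. Your route is arguably more robust, since it does not implicitly assume $G$ is differentiable everywhere.

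For (i), however, you do not actually close the argument, and the approach you spend most effort on is the wrong one. The paper's proof is a one-line application of l'H\^opital to the $0/0$ quotient
\[
1-G(h)=\lim_{t\to\infty}\frac{1-F(t+h)}{1-F(t)}=\lim_{t\to\infty}\frac{f(t+h)}{f(t)},
\]
which you mention only as an aside. Your attempt to pass the limit through the tail-integral quotient $\int_0^\infty f(t+h+s)\dd s\big/\int_0^\infty f(t+s)\dd s$ is unnecessary and, as you yourself note, hard to justify without further hypotheses on $f$. You are right that the l'H\^opital step, as written, presupposes existence of the density-ratio limit rather than deducing it; the paper glosses over this point too, so the gap is shared, but the l'H\^opital formulation is still the intended and cleanest route to (i).
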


\begin{proof} Let  $F$ be the CDF of $T$. Then, by the l'H\^{o}pital rule and the fundamental theorem of calculus, we have
\begin{eqnarray*}
G(h) & = & \lim_{t\to\infty} \pp(t<T\leq t+h|T>t) \\
     & = & 1 - \lim_{t\to\infty}\frac{1-F(t+h)}{1-F(t)}\\
     & = & 1 - \lim_{t\to\infty}\frac{F'(t+h)}{F'(t)} \\
     & = & 1 - \lim_{t\to\infty}\frac{f(t+h)}{f(t)}.\\
\end{eqnarray*}
Consequently,
\begin{eqnarray*}
    \frac{G(h+\Delta h)-G(h)}{\Delta h}  & = & \frac{1}{\Delta h}\left(\lim_{t\to\infty}\frac{(f(t+h)-f(t+h+\Delta
    h))}{f(t+h)} \frac{f(t+h)}{f(t)}\right) \\
      & = & \frac{1}{\Delta h}\left(1-\lim_{t\to\infty}\frac{f(t+h+\Delta h)}{f(t+h)}\right)\lim_{t\to\infty}\frac{f(t+h)}{f(t)}.\\
\end{eqnarray*}
Thus,
$$
\frac{G(h+\Delta h)-G(h)}{\Delta h}=\frac{G(\Delta h)(1-G(h))}{\Delta h}.
$$
and by taking limit as $\Delta h \to 0$,  we have $G'(h)=G'(0)(1-G(h))$. This differential equation has a unique solution, for $G'(0)=m$, which is given by $G(h)=1-\exp(-m h)$. Therefore, the proof is complete.
\end{proof}

\begin{remark}\label{r1}
Let $f$ be as in Theorem \ref{t1} and suppose that there exists $t_0>0$ such that, for each $t\geq t_0$, $f(t)>0$ and $f$ is decreasing on $[t_0,\infty)$. Hence, from (i) in Theorem \ref{t1}, for each $t\geq t_0$, the function $G_t:\rr_+\to\rr_+$ defined as $G_t(h)=1- f(t+h)/f(t)$ is a CDF. The family $\{G_t\}_{t>0}$ of CDFs is considered later to conduct some simulations.
\end{remark}

\begin{remark}\label{r2}
As in proof of Theorem \ref{t1}, for each $h>0$, $G'(h)=G'(0)(1-G(h))$, we have $G'(0)>0$ whenever $G$ is not constant. Hence, in this case, $G$ corresponds to an exponential CDF with parameter $\lambda=G'(0)$. Otherwise, $G$ corresponds to the degenerate  distribution at zero.
\end{remark}

\begin{example}\label{ex1} Let $T$ be a random variable having gamma distribution with parameters $\lambda >0$ and $\beta>0$, that is, the PDF of $T$ is given by
$$
f(t)=\frac{\lambda(\lambda t)^{\beta-1}\exp({-\lambda t})}{\Gamma(\beta)}, \quad t>0,
$$
where $\Gamma$ is the usual gamma function. Then, $T$ has a limit CDF given by
$$
G(h)=1-\exp(-\lambda h), \quad h\geq0.
$$
\end{example}

\vspace{0.25cm}
The next theorem provides the limit CDF of $T_k$, corresponding to the occurrence time of the $k$th large earthquake, such as defined in Section \ref{section2}.

\begin{theorem}\label{t2}
Suppose $T$ has a PDF given by \eqref{e4} and there exists $\lim_{t\to\infty}\lambda(t)=m$, where $m$ is a strictly
positive real constant. Then, the limit CDF of $T_k$ is given by $$
G(h)=1-\exp(-mh), \quad h\geq0,
$$
with $m$ not depending on $k$.
\end{theorem}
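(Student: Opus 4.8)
The plan is to replicate the l'H\^opital computation from the proof of Theorem \ref{t1}(i): writing $F_k$ for the CDF of $T_k$, one has
$$
\lim_{t\to\infty}\pp(t<T_k\le t+h\mid T_k>t)=1-\lim_{t\to\infty}\frac{1-F_k(t+h)}{1-F_k(t)}=1-\lim_{t\to\infty}\frac{f_k(t+h)}{f_k(t)},
$$
provided the last limit exists. Before doing anything else I would record that $\lambda(t)\to m>0$ forces $\lambda(t)>0$ for all large $t$ and $\Lambda(t)=\int_0^t\lambda(u)\dd u\to\infty$; the latter yields $1-F_k(t)\to0$ (equivalently $T_k<\infty$ almost surely, so the conditioning event eventually has positive probability), and the former yields $f_k(t)>0$ for large $t$, so both the conditional probability and the l'H\^opital ratio make sense.

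The core of the argument is a direct evaluation of this density ratio. Using \eqref{e4},
$$
\frac{f_k(t+h)}{f_k(t)}=\frac{\lambda(t+h)}{\lambda(t)}\left(\frac{\Lambda(t+h)}{\Lambda(t)}\right)^{k-1}\exp\!\big(-(\Lambda(t+h)-\Lambda(t))\big),
$$
and I would handle the three factors separately. The first tends to $m/m=1$ since $\lambda(t)\to m>0$. For the exponential factor, $\Lambda(t+h)-\Lambda(t)=\int_0^h\lambda(t+s)\dd s\to mh$ as $t\to\infty$, so it tends to $\exp(-mh)$. For the middle factor, $\Lambda(t+h)/\Lambda(t)=1+(\Lambda(t+h)-\Lambda(t))/\Lambda(t)\to1$, because the numerator stays bounded while $\Lambda(t)\to\infty$; hence its $(k-1)$st power also tends to $1$. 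Multiplying the three limits gives $\lim_{t\to\infty}f_k(t+h)/f_k(t)=\exp(-mh)$, and therefore $G(h)=1-\exp(-mh)$ for every $h\ge0$; since $m$ enters only through $\lim_{t\to\infty}\lambda(t)$, the limit CDF does not depend on $k$.

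I expect the only genuinely substantive point to be the middle factor, which is the sole place where $k$ appears: it is precisely $\Lambda(t)\to\infty$ — an immediate consequence of $\lambda\to m>0$ — that renders the polynomial-in-$\Lambda$ term asymptotically invisible. The rest is routine bookkeeping: the interchange $\int_0^h\lambda(t+s)\dd s\to\int_0^h m\dd s$ is justified by bounded convergence, since a continuous $\lambda$ with $\lambda(t)\to m$ is bounded on some $[t_0,\infty)$ and $\lambda(t+s)\to m$ pointwise in $s\in[0,h]$, and the l'H\^opital step was already validated above. It is also worth noting in passing that the argument recovers Example \ref{ex1} when $\lambda$ is constant, and that the hypothesis $m>0$ is exactly what excludes the degenerate-at-zero alternative permitted by Theorem \ref{t1}.
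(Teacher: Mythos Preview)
Your proof is correct and follows essentially the same approach as the paper: both write the density ratio $f_k(t+h)/f_k(t)$ as a product of three factors and invoke Theorem~\ref{t1}(i). The only differences are in the bookkeeping for two of the limits---the paper uses the mean value theorem (rather than bounded convergence on $\int_0^h\lambda(t+s)\dd s$) for the exponential factor, and l'H\^opital (rather than $\Lambda(t)\to\infty$ with a bounded numerator) for $\Lambda(t+h)/\Lambda(t)\to1$---but these are interchangeable justifications of the same computation, not a different strategy.
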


\begin{proof}
From l'H\^{o}pital rule, we have
$$
\lim_{t\to\infty}\frac{\Lambda(t+h)}{\Lambda(t)}=\lim_{t\to\infty}\frac{\lambda(t+h)}{\lambda(t)}=1.
$$
In addition, from the mean value theorem, there exists $\xi(t)$ between $t$ and $t+h$, with $t >0$ and $h\geq0$, such that
$$ \Lambda(t+h)-\Lambda(t)=\lambda(\xi(t))h.
$$
Consequently,
$$ \frac{f_k(t+h)}{f_k(t)}=\exp(-\lambda(\xi(t))h)\left(\frac{\Lambda(t+h)}{\Lambda(t)}\right)^{k-1}\frac{\lambda(t+h)}{\lambda(t)}.
$$
Thus, as
$$
\lim_{t\to\infty}\frac{f_k(t+h)}{f_k(t)}=\lim_{t\to\infty}\exp(-\lambda(\xi(t))h)=\exp(-m h),
$$
the proof follows from (i) in Theorem \ref{t1}.
\end{proof}

\begin{remark}\label{r3}
We call the constant $m$ in Theorem \ref{t2} as the asymptotic slope of $\Lambda$.
\end{remark}

\section{Asymptotic slope of the cumulative intensity}\label{section4}

Let $N$ denote the NHPP with CIF $\Lambda$ defined in \eqref{e3}. We are assuming the PDF of the occurrence time of the next large earthquake is given by \eqref{e4}. Then, it follows from Theorem \ref{t2} that the asymptotic slope $m$ of $\Lambda$, when it exists,  is the unique parameter of the limit CDF. This parameter  needs to be estimated and due to $\Lambda$ can be estimated by means of $N$, the following theorem is a first approach in this direction.

\begin{theorem}\label{t3}
Suppose  $\lim_{t\to\infty}\lambda(t)=m$. Then, $\lim_{t\to\infty}\Lambda(t)/t=m$.
\end{theorem}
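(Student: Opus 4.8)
The plan is to recognize this as a Cesàro-type (or L'Hôpital-type) statement and give the cleanest available argument. The most economical route is a direct application of the l'Hôpital rule for the indeterminate form $\infty/\infty$: since $\lambda$ is continuous and $m>0$, we have $\Lambda(t)\to\infty$ as $t\to\infty$, and $t\to\infty$ trivially, so both numerator and denominator of $\Lambda(t)/t$ diverge. The derivative of the numerator is $\Lambda'(t)=\lambda(t)$ (using the standing assumption that $\lambda$ is continuous, so the fundamental theorem of calculus applies), and the derivative of the denominator is $1$. Hence
$$
\lim_{t\to\infty}\frac{\Lambda(t)}{t}=\lim_{t\to\infty}\frac{\lambda(t)}{1}=m,
$$
provided the latter limit exists, which is exactly the hypothesis. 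This is essentially a one-line proof and mirrors the l'Hôpital applications already used in the proofs of Theorems \ref{t1} and \ref{t2}.

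If one prefers an $\varepsilon$--$\delta$ argument that does not invoke l'Hôpital (useful since l'Hôpital for $\infty/\infty$ is sometimes viewed as heavier machinery than needed), I would instead proceed directly. Fix $\varepsilon>0$. By hypothesis there is $t_\varepsilon>0$ such that $|\lambda(u)-m|<\varepsilon$ for all $u\geq t_\varepsilon$. Then for $t>t_\varepsilon$ write
$$
\frac{\Lambda(t)}{t}=\frac{1}{t}\int_0^{t_\varepsilon}\lambda(u)\,\dd u+\frac{1}{t}\int_{t_\varepsilon}^t\lambda(u)\,\dd u.
$$
The first term tends to $0$ as $t\to\infty$ because $\int_0^{t_\varepsilon}\lambda(u)\,\dd u$ is a fixed finite number. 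For the second term, the bound $|\lambda(u)-m|<\varepsilon$ on $[t_\varepsilon,t]$ gives
$$
\left|\frac{1}{t}\int_{t_\varepsilon}^t\lambda(u)\,\dd u-\frac{t-t_\varepsilon}{t}\,m\right|\leq\frac{t-t_\varepsilon}{t}\,\varepsilon<\varepsilon,
$$
and since $(t-t_\varepsilon)/t\to1$, combining these estimates shows $\limsup_{t\to\infty}|\Lambda(t)/t-m|\leq\varepsilon$. As $\varepsilon>0$ was arbitrary, $\Lambda(t)/t\to m$.

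I do not anticipate a genuine obstacle here; the statement is a standard averaging fact and the hypotheses (continuity of $\lambda$, existence of the limit $m$) are exactly what is needed to run either argument cleanly. The only point requiring a moment's care is justifying $\Lambda(t)\to\infty$ before invoking l'Hôpital in the first approach — but this is immediate since $\lambda(u)>m/2$ for $u$ large, so $\Lambda(t)\geq\Lambda(t_\varepsilon)+(m/2)(t-t_\varepsilon)\to\infty$. Given the paper's style, I would present the short l'Hôpital version as the proof and perhaps remark that it also follows from the elementary estimate above.
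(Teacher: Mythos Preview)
Your proposal is correct. The paper's proof is exactly your second, $\varepsilon$--$t_0$ argument: it writes $\left|\frac{1}{t}\int_0^t\lambda(s)\dd s - m\right|$, splits the integral at $t_0$, bounds the tail piece by $\varepsilon$, lets $t\to\infty$ to kill the initial piece, and concludes by the arbitrariness of $\varepsilon$. So your ``alternative'' route is in fact the paper's route, almost verbatim.

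Your preferred l'H\^opital version is a genuinely shorter alternative and is consistent with the paper's style (l'H\^opital is already invoked in Theorems~\ref{t1} and~\ref{t2}). One small remark: you do not actually need $m>0$ or $\Lambda(t)\to\infty$ to run it; the $*/\infty$ form of l'H\^opital only requires the denominator to diverge, and here the denominator is $t$. This removes the only ``point requiring care'' you flagged and makes the one-line argument work even if $m=0$. The paper's direct estimate buys self-containment and avoids appealing to the $*/\infty$ version of l'H\^opital, at the cost of a few more lines; your l'H\^opital route buys brevity.
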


\begin{proof}
For each $\varepsilon>0$, let $t_0>0$, such that $|\lambda(s)-m|<\varepsilon$, whenever $s\geq t_0$. Consequently, for each $t>t_0$, we have $$
\begin{array}{ccl}
  \left|\frac{1}{t}\int_0^t\lambda(s)\dd s -m\right| & = & \left|\frac{1}{t}\int_0^t(\lambda(s)-m)\dd s\right|  \\
    & \leq &\frac{1}{t}\int_0^t|\lambda(s)-m|\dd s  \\
    & \leq &\frac{1}{t}\int_0^{t_0}|\lambda(s)-m|\dd s + \frac{1}{t}\int_{t_0}^t|\lambda(s)-m|\dd s\\
    & < &\frac{1}{t}\int_0^{t_0}|\lambda(s)-m|\dd s +\varepsilon.\\
\end{array} $$ By taking limit as $t\to\infty$, we obtain $$ \limsup_{t\to\infty}\left|\frac{1}{t}\int_0^t\lambda(s)\dd s -m\right|\leq\varepsilon. $$
As $\varepsilon>0$ is arbitrary, we have
$$
\lim_{t\to\infty}\frac{\Lambda(t)}{t} = m,
$$
which concludes the proof.
\end{proof}

We are just assuming existence of the asymptotic slope of the CIF to provide more generality to the distribution of $T_k$. However, the results of this work maintain their importance whether we assume that the IF of the Poisson process is constant and equal to $m$. In this case, $T_k$ has Erlang distribution and its PDF is given by \eqref{e4} by replacing $\lambda(t)$ by $m$. Hence, in practical terms, instead of knowing the asymptotic slope of the CIF, it suffices to know or estimate a constant value of $\lambda(t)$, for all $t\geq \tau^*$, where $\tau^*\geq0$ is a large enough time instant. The following corollary aims to this end.

\begin{corollary}\label{c1}
Let $\tau^*\geq0$, $m>0$ and suppose that, for each $t\geq\tau^*$, $\lambda(t)=m$. Then,
$$
\lim_{t\to\infty}\frac{\Lambda(t)-\Lambda(\tau^*)}{t-\tau^*} = m.
$$
\end{corollary}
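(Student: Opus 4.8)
The plan is to reduce the statement to a one-line evaluation of the numerator and, separately, to note that it is a genuine corollary of Theorem~\ref{t3} after a translation of the time origin. By hypothesis $\lambda(u)=m$ for every $u\geq\tau^*$, so using the definition \eqref{e3} of the CIF and splitting the integral at $\tau^*$,
$$
\Lambda(t)-\Lambda(\tau^*)=\int_{\tau^*}^{t}\lambda(u)\dd u=\int_{\tau^*}^{t} m\dd u=m\,(t-\tau^*),\qquad t\geq\tau^*.
$$
Hence the quotient $(\Lambda(t)-\Lambda(\tau^*))/(t-\tau^*)$ is identically equal to $m$ for every $t>\tau^*$, and the limit as $t\to\infty$ is $m$. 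I would present this direct computation as the proof, since it is short and self-contained.

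As an alternative I would record the translation argument, which is perhaps the reason the result is stated as a corollary of Theorem~\ref{t3}: set $\tilde\lambda(s)=\lambda(s+\tau^*)$ and $\tilde\Lambda(s)=\int_0^s\tilde\lambda(u)\dd u=\Lambda(s+\tau^*)-\Lambda(\tau^*)$. Since $\tilde\lambda(s)=m$ for all $s\geq0$, in particular $\lim_{s\to\infty}\tilde\lambda(s)=m$, so Theorem~\ref{t3} applied to the shifted intensity gives $\lim_{s\to\infty}\tilde\Lambda(s)/s=m$; substituting $t=s+\tau^*$ recovers exactly the stated limit.

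There is essentially no obstacle here. The only point that deserves a line of care is to split the integral defining $\Lambda(t)-\Lambda(\tau^*)$ precisely at $\tau^*$, so that the constancy of $\lambda$ on $[\tau^*,\infty)$ can be invoked; the behaviour of $\lambda$ on $[0,\tau^*)$ is irrelevant because it is entirely absorbed into the constant $\Lambda(\tau^*)$, which cancels in the difference. Consequently the quotient is not merely asymptotically $m$ but exactly $m$ for all $t>\tau^*$, which is a slightly stronger conclusion than what the limit statement asks for, and I would mention this strengthening in passing.
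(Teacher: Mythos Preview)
Your proof is correct. The paper states Corollary~\ref{c1} without proof, as an immediate consequence of Theorem~\ref{t3}; both your direct computation (showing the quotient is identically $m$ for $t>\tau^*$) and your translation argument (applying Theorem~\ref{t3} to the shifted intensity) are valid and appropriate ways to justify it.
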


From Corollary \ref{c1}, an estimate of the CIF allow us to estimate its asymptotic slope. Estimates of the CIF for an NHPP have been investigated by a number of authors, such as \cite{AL00}, \cite{He03}, \cite{Le91,Le04} and \cite{LS79}, where some of these estimates are given by the own NHPP. Hence, the following theorem is quite useful to this purpose.

\begin{theorem}\label{t4}
Let $\tau^*\geq0$ and suppose that $\lim_{t\to\infty}\lambda(t)=m$ exists. Then, the following two conditions hold:
\begin{itemize}
\item [(i)] $\lim_{t\to\infty}{(N_t-N_{\tau^*}})/{(t-\tau^*)}=m$, $\pp$-almost surely (a.s.) and

\item [(ii)] $\sqrt{t-\tau^*}\left(\frac{N_t-N_{\tau^*}}{t-\tau^*}-m\right)\mathop{\to}\limits^{\mathcal{D}}\,{\rm N}(0,m)$ as $t \to\infty$, whenever $\lim_{t\to\infty}\sqrt{t}\left(\frac{\Lambda(t)}{t}-m\right)=0$, where $\mathop{\to}\limits^{\mathcal{D}}$ stands for convergence in distribution and ${\rm N}(0,m)$ is a normal random variable with mean zero and variance $m$.
\end{itemize}
\end{theorem}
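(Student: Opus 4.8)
The plan is to realise the NHPP as a deterministic time change of a unit-rate Poisson process. Since $\lambda$ is continuous, $\Lambda$ is continuous and nondecreasing, and $\lambda(t)\to m>0$ forces $\Lambda(t)\to\infty$; hence one may write $N_t=M_{\Lambda(t)}$ for a standard (rate one) Poisson process $M=\{M_s\}_{s\geq0}$, so that $N_t-N_{\tau^*}=M_{\Lambda(t)}-M_{\Lambda(\tau^*)}$. I would also record at the outset that, by Theorem \ref{t3},
$$
\frac{\Lambda(t)-\Lambda(\tau^*)}{t-\tau^*}=\frac{t}{t-\tau^*}\,\frac{\Lambda(t)}{t}-\frac{\Lambda(\tau^*)}{t-\tau^*}\longrightarrow m
\qquad\text{and}\qquad
\frac{\Lambda(t)}{t-\tau^*}\longrightarrow m .
$$

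For part (i), I would invoke the strong law of large numbers for the standard Poisson process, $M_s/s\to1$ $\pp$-a.s.\ as $s\to\infty$. Because $\Lambda(t)\to\infty$ and $M_{\Lambda(\tau^*)}$ is an a.s.\ finite random variable,
$$
\frac{M_{\Lambda(t)}-M_{\Lambda(\tau^*)}}{\Lambda(t)-\Lambda(\tau^*)}
=\frac{\Lambda(t)}{\Lambda(t)-\Lambda(\tau^*)}\,\frac{M_{\Lambda(t)}}{\Lambda(t)}-\frac{M_{\Lambda(\tau^*)}}{\Lambda(t)-\Lambda(\tau^*)}\longrightarrow 1\quad\pp\text{-a.s.},
$$
and multiplying by $(\Lambda(t)-\Lambda(\tau^*))/(t-\tau^*)\to m$ yields $(N_t-N_{\tau^*})/(t-\tau^*)\to m$ a.s. (One can equivalently avoid the time change: the increments $N_n-N_{n-1}$, $n\in\nn$, are independent Poisson variables with uniformly bounded variances $\Lambda(n)-\Lambda(n-1)\to m$, so Kolmogorov's SLLN applies to the partial sums, and monotonicity of $t\mapsto N_t$ transfers the conclusion from integer to real $t$.)

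For part (ii), I would start from the algebraic decomposition
$$
\sqrt{t-\tau^*}\left(\frac{N_t-N_{\tau^*}}{t-\tau^*}-m\right)
=\frac{(M_{\Lambda(t)}-\Lambda(t))-(M_{\Lambda(\tau^*)}-\Lambda(\tau^*))}{\sqrt{t-\tau^*}}
+\frac{(\Lambda(t)-mt)-(\Lambda(\tau^*)-m\tau^*)}{\sqrt{t-\tau^*}} =: A_t+B_t .
$$
The term $B_t$ is purely deterministic: $(\Lambda(\tau^*)-m\tau^*)/\sqrt{t-\tau^*}\to0$, while $(\Lambda(t)-mt)/\sqrt{t-\tau^*}=\sqrt{t/(t-\tau^*)}\,\sqrt{t}\,(\Lambda(t)/t-m)\to0$ by the extra hypothesis, so $B_t\to0$. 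In $A_t$, the summand $(M_{\Lambda(\tau^*)}-\Lambda(\tau^*))/\sqrt{t-\tau^*}$ is a fixed random variable over $\sqrt{t-\tau^*}$, hence $\to0$ a.s., and
$$
\frac{M_{\Lambda(t)}-\Lambda(t)}{\sqrt{t-\tau^*}}
=\frac{M_{\Lambda(t)}-\Lambda(t)}{\sqrt{\Lambda(t)}}\cdot\sqrt{\frac{\Lambda(t)}{t-\tau^*}},
$$
where the first factor converges in distribution to ${\rm N}(0,1)$ by the central limit theorem for the unit-rate Poisson process evaluated along the deterministic divergent path $s=\Lambda(t)$, and the second factor tends to $\sqrt m$. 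By Slutsky's theorem $A_t\mathop{\to}\limits^{\mathcal{D}}{\rm N}(0,m)$, and applying Slutsky once more to $A_t+B_t$ gives the assertion.

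The only delicate point — hence the main obstacle — is the central limit theorem $(M_s-s)/\sqrt s\mathop{\to}\limits^{\mathcal{D}}{\rm N}(0,1)$ as $s\to\infty$ through the continuum, and in particular along the deterministic reparametrisation $s=\Lambda(t)$: this follows at once from the fact that the characteristic function of $(M_s-s)/\sqrt s$, namely $\exp\!\bigl(s(e^{\im\theta/\sqrt s}-1-\im\theta/\sqrt s)\bigr)$, converges to $e^{-\theta^2/2}$, together with continuity of $\Lambda$ and $\Lambda(t)\to\infty$. One must also check that the hypothesis $\sqrt t\,(\Lambda(t)/t-m)\to0$ enters precisely to annihilate the drift term $B_t$, since without it the deterministic bias of $\Lambda$ could shift, or even destroy, the limiting normal law. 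Everything else is routine bookkeeping with Slutsky's theorem.
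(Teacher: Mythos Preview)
Your argument is correct and follows essentially the same route as the paper: both realise $N$ as a time change of a unit-rate Poisson process, invoke a law of large numbers for (i), and for (ii) split the centred quantity into a Poisson-CLT piece $\bigl(L_{\Lambda(t)}-\Lambda(t)\bigr)/\sqrt{\Lambda(t)}$ times $\sqrt{\Lambda(t)/(t-\tau^*)}\to\sqrt m$ plus negligible remainders, with Slutsky to combine. The only cosmetic differences are that the paper quotes a martingale SLLN (Dacunha-Castelle \& Duflo, Theorem 8.2.17) for $(N_t-\Lambda(t))/\Lambda(t)\to0$ in place of your direct Poisson SLLN, and it transfers the CLT via equality in distribution of $N_t-N_{\tau^*}$ and $L_{\Lambda(t)}-L_{\Lambda(\tau^*)}$ rather than a pathwise time change; the algebraic grouping of terms in (ii) is also arranged slightly differently, but the content is the same.
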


\begin{proof}
Let $M=\{M_t;t\geq0\}$ be the martingale defined by $M_t=N_t-\Lambda(t)$. As
$$
\frac{N_t-N_{\tau^*}}{t-\tau^*}=\frac{\Lambda(t)-\Lambda(\tau^*)}{t-\tau^*}\left(\frac{M_t-M_{\tau^*}}{\Lambda(t)-\Lambda(\tau^*)}+1\right),
$$
and Theorem 8.2.17 in \cite{DCD86} implies $\lim_{t\to\infty}(M_t-M_{\tau^*})/(\Lambda(t)-\Lambda(\tau^*))=0$, $\pp$-a.s., from Theorem \ref{t3}, we have $\lim_{n\to\infty}{(N_t-N_{\tau^*})}/{(t-\tau^*)}=m$, $\pp$-a.s.

Suppose $\lim_{t\to\infty}\sqrt{t}({\Lambda(t)}/{t}-m)=0$ and let $L$ be a homogeneous Poisson process (HPP) with rate equal to one.  For each $t\geq\tau^*$, we have
$$
 \hspace{-3.8in}\sqrt{t-\tau^*}\left(\frac{L_{\Lambda(t)}-L_{\Lambda(\tau^*) }}{t-\tau^*}-m\right)  =
$$
$$\hspace{2in}\left(\frac{\Lambda(t)}{t-\tau^*}\right)^{1/2} \left(\frac{L_{\Lambda(t)}-\Lambda(t)}{\sqrt{\Lambda(t)}}\right)+\sqrt{t-\tau^*}\left(\frac{\Lambda(t)-L_{\Lambda(\tau^*) }}{t-\tau^*}-m\right).
$$
Consequently, in order to prove that
\begin{equation}\label{e5}
  \sqrt{t-\tau^*}\left(\frac{L_{\Lambda(t)}-L_{\Lambda(\tau^*) }}{t-\tau^*}-m\right)\mathop{\to} \limits^{\mathcal{D}}\mathrm{N}(0,m), \quad \mbox{as} \quad t \to\infty,
\end{equation}
and since $\lim_{t\to\infty} L_{\Lambda(\tau^*)}/\sqrt{t-\tau^*}=0$, we need to prove
\begin{equation}\label{e6}
  \left(\frac{\Lambda(t)}{t-\tau^*}\right)^{1/2} \left(\frac{L_{\Lambda(t)}-\Lambda(t)}{\sqrt{\Lambda(t)}}\right)\mathop{\to} \limits^{\mathcal{D}}\mathrm{N}(0,m),\quad \mbox{as}\quad t \to\infty,
\end{equation}
and
\begin{equation}\label{e7}
  \lim_{t\to\infty}\sqrt{t-\tau^*}\left(\frac{\Lambda(t)}{t-\tau^*}-m\right)=0.
\end{equation}
As $(L_t-t)/\sqrt{t}\mathop{\to} \limits^{\mathcal{D}}\mathrm{N}(0,1)$ and $\lim_{t\to\infty}\Lambda(t)/(t-\tau^*)=m$, condition \eqref{e6} follows. From the intermediate value theorem, for each $t>0$, there exists $\xi(t)$ between $t-\tau^*$ and $t$ such that $\Lambda(t) = \Lambda(t-\tau^*)+\lambda(\xi(t))\tau^*$. This fact implies that
$$
\sqrt{t-\tau^*}\left(\frac{\Lambda(t)}{t-\tau^*}-m\right)=\sqrt{t-\tau^*}\left(\frac{\Lambda(t-\tau^*)}{t-\tau^*}-m\right)+
\frac{\lambda(\xi(t))\tau^*}{\sqrt{t-\tau^*}}
$$
and since we are assuming $\lim_{t\to\infty}\sqrt{t-\tau^*}({\Lambda(t-\tau^*)}/{(t-\tau^*)}-m)=0$ and $\lambda$ is bounded, condition \eqref{e7} holds.  Hence, we have proven condition \eqref{e5}, but due to $N_t-N_{\tau^*}$ and $L_{\Lambda(t)}-L_{\Lambda(\tau^*)}$ have the same distribution, for each $t>0$, we have
$$
\sqrt{t-\tau^*}\left(\frac{N_t-N_{\tau^*}}{t-\tau^*}-m\right)\mathop{\to}\limits^{\mathcal{D}}\,{\rm N}(0,m),\quad \mbox{as}\quad t \to\infty,
$$
which completes the proof.
\end{proof}

\begin{corollary}\label{c2}
Let $\tau^*\geq0$ and suppose that $\lambda(t)=m$, for each $t\geq\tau^*$. Then, the family $\{\widehat{m}_{t}\}_{t>\tau^*}$, defined as $\widehat{m}_{t}=(N_t-N_{\tau^*})/(t-\tau^*)$, is asymptotically normal distributed with mean $m$ and variance $m/(t-\tau^*)$.
\end{corollary}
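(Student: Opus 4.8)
The plan is to obtain the statement as an immediate specialization of Theorem \ref{t4}. Since $\lambda(t)=m$ for every $t\geq\tau^*$, we trivially have $\lim_{t\to\infty}\lambda(t)=m$ with $m>0$, so both the standing hypothesis and part (i) of Theorem \ref{t4} apply; in particular $\widehat{m}_t=(N_t-N_{\tau^*})/(t-\tau^*)\to m$ $\pp$-a.s. What remains is to check the extra regularity condition needed for part (ii), namely $\lim_{t\to\infty}\sqrt{t}\,(\Lambda(t)/t-m)=0$. For this I would first write the CIF explicitly on $[\tau^*,\infty)$: from \eqref{e3},
$$
\Lambda(t)=\int_0^{\tau^*}\lambda(u)\dd u+\int_{\tau^*}^{t}m\,\dd u=\Lambda(\tau^*)+m\,(t-\tau^*),\qquad t\geq\tau^*.
$$

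Using this expression, $\Lambda(t)/t-m=(\Lambda(\tau^*)-m\tau^*)/t$, hence
$$
\sqrt{t}\left(\frac{\Lambda(t)}{t}-m\right)=\frac{\Lambda(\tau^*)-m\tau^*}{\sqrt{t}}\longrightarrow 0\quad\text{as }t\to\infty,
$$
so the hypothesis of Theorem \ref{t4}(ii) is verified. Applying that theorem then yields
$$
\sqrt{t-\tau^*}\left(\widehat{m}_t-m\right)=\sqrt{t-\tau^*}\left(\frac{N_t-N_{\tau^*}}{t-\tau^*}-m\right)\mathop{\to}\limits^{\mathcal{D}}\mathrm{N}(0,m)\quad\text{as }t\to\infty,
$$
which is precisely the assertion that $\{\widehat{m}_t\}_{t>\tau^*}$ is asymptotically normal with mean $m$ and variance $m/(t-\tau^*)$.

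There is essentially no genuine obstacle here; the only point requiring a line of care is bookkeeping around the constant $\Lambda(\tau^*)$ (the expected number of events accumulated before $\tau^*$), which must be shown not to disturb the normalization — and indeed it contributes only a term of order $1/\sqrt{t}$, as displayed above. As a consistency check one may also note that $N_t-N_{\tau^*}$ has a Poisson distribution with mean $m(t-\tau^*)$, so $\widehat{m}_t$ is exactly unbiased for $m$ with exact variance $m/(t-\tau^*)$, and the stated limit is exactly the central limit theorem for this Poisson increment. This concludes the proof.
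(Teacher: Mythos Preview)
Your proposal is correct and follows exactly the route the paper intends: the corollary is stated without proof precisely because it is meant to be read as an immediate specialization of Theorem~\ref{t4}, and the only detail left to the reader is the verification of the extra hypothesis $\sqrt{t}\,(\Lambda(t)/t-m)\to0$, which you carry out cleanly. Your closing remark that $N_t-N_{\tau^*}$ is exactly Poisson with mean $m(t-\tau^*)$ is a nice sanity check and in fact gives an even more direct proof bypassing Theorem~\ref{t4} altogether, but the approach via the theorem is what the paper has in mind.
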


\section{ML estimation and a type of empirical CDF}\label{section5}

Let $\tau^*>0$ and assume that, for each $t\geq\tau^*$, $\lambda(t)=m$. Then, the CIF of the NHPP has the form
$$
\Lambda(t)= \left\{
\begin{array}{ccl}
  \int_0^t\lambda(s)\dd s, & \mbox{if} & 0\leq t<\tau^*; \\
  \int_0^t\lambda(s)\dd s+(t-\tau^*)m, & \mbox{if} &  \tau^*\leq t.
\end{array}
\right.
$$
In this section, the parameter $m$ is estimated by means of the ML method. Fix $t>\tau^*$ and let $P_t$ be the distribution of $N$ on the Skorohod space $\mathrm{D}([0,t],\mathbb{R})$ of the right continuous functions from $[0,t]$ to $\mathbb{R}$, which have left limits. Hence, $P_t$ is absolutely continuous with respect to $Q$, the distribution of an HPP with rate equal to one. From Theorem 3 in \cite{Br81} or Theorem 2.31 in \cite{Ka91}, the Radon-Nikodym derivative of $P_t$ with respect to $Q$, for $t>\tau^*$, evaluated at $N$, is given by
\begin{equation} \label{e8}
\footnotesize{
\frac{\dd P_t}{\dd Q}(N)=\exp\left(\int_0^{\tau^*}\log(\lambda(u))\dd N_u+\log(m)(N_{t}-N_{\tau^*})-\int_0^{\tau^*}(\lambda(u)-1)\dd u-(t-\tau^*)(m-1)\right).}
\end{equation}
It is easy to see from \eqref{e8} that the ML estimator of $m$, for $\tau>\tau^*$,  is given by
$$
\widehat{m}_{\tau}=\frac{N_{\tau}-N_{\tau^*}}{\tau-\tau^*} ,
$$
which coincides with the estimator of $m$ given in Corollary \ref{c2}. Consequently, the limit distribution $G$, on the time interval $[0,\tau]$, can be estimated by means of a random CDF $\widehat{G}_\tau$ defined for $h\geq0$ as $\widehat{G}_\tau(h)=1-\exp(-\widehat{m}_\tau h)$.
Even thought $\widehat{G}_\tau$ is not properly the typical empirical CDF based on independent random variables, a
Glivenko-Cantelli type theorem can be established as follows.

\begin{theorem}\label{p1} Under assumptions and notations stated in this section, we have
$$
\lim_{\tau\to\infty}\sup_{h\geq0}|\widehat{G}_\tau(h)-G(h)|=0,\quad \pp\mbox{-a.s.}
$$
\end{theorem}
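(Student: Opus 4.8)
The plan is to reduce the uniform statement to the pointwise almost sure convergence of the estimator $\widehat{m}_\tau$ to $m$, and then to upgrade pointwise to uniform by exploiting the explicit exponential form of $G$ and $\widehat{G}_\tau$. First I would observe that the standing assumption of this section, namely $\lambda(t)=m$ for all $t\geq\tau^*$, implies in particular $\lim_{t\to\infty}\lambda(t)=m$, so part (i) of Theorem \ref{t4} applies and gives
\[
\widehat{m}_\tau=\frac{N_\tau-N_{\tau^*}}{\tau-\tau^*}\longrightarrow m,\qquad \pp\mbox{-a.s., as }\tau\to\infty.
\]
This is the only probabilistic ingredient; everything afterwards is a deterministic estimate carried out along a fixed sample path in the almost sure event just described.

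Next I would convert the convergence of $\widehat{m}_\tau$ into uniform control of $|\widehat{G}_\tau-G|$. Fix a path on which $\widehat{m}_\tau\to m$. Assuming $m>0$ (the case $m=0$, in which $\widehat{m}_\tau\equiv0$, being trivial), there is a path-dependent $\tau_0>\tau^*$ with $\widehat{m}_\tau\geq m/2$ for every $\tau\geq\tau_0$. For such $\tau$ and any $h\geq0$ the mean value theorem applied to $c\mapsto\e{-ch}$ produces a point $\eta(h)$ between $m$ and $\widehat{m}_\tau$ (hence $\eta(h)\geq m/2$) with
\[
|\widehat{G}_\tau(h)-G(h)|=\bigl|\e{-mh}-\e{-\widehat{m}_\tau h}\bigr|=|m-\widehat{m}_\tau|\,h\,\e{-\eta(h)h}\leq|m-\widehat{m}_\tau|\,h\,\e{-mh/2}.
\]
Since $\sup_{h\geq0}h\,\e{-mh/2}=2/(em)$ is a finite constant that does not depend on $h$ or on $\tau$, this yields $\sup_{h\geq0}|\widehat{G}_\tau(h)-G(h)|\leq\bigl(2/(em)\bigr)|m-\widehat{m}_\tau|$, and letting $\tau\to\infty$ the right-hand side tends to zero. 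As this holds on the almost sure set where $\widehat{m}_\tau\to m$, the theorem follows.

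An essentially equivalent alternative is to note that $\widehat{m}_\tau\to m$ $\pp$-a.s.\ trivially gives $\widehat{G}_\tau(h)\to G(h)$ for each fixed $h\geq0$, $\pp$-a.s., and then to invoke Polya's theorem: pointwise convergence of distribution functions to a \emph{continuous} limiting distribution function is automatically uniform. One applies this pathwise on the almost sure event where the estimator converges.

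The step that I expect to require the most care, although it is rather soft, is the uniformity over the unbounded range $h\in[0,\infty)$: pointwise convergence of the monotone functions $\widehat{G}_\tau$ does not by itself control the supremum near $h=\infty$. In the direct argument this is dealt with by the boundedness of $h\mapsto h\,\e{-mh/2}$; in the Polya version it is absorbed into the fact that both $\widehat{G}_\tau(h)$ and $G(h)$ tend to $1$ as $h\to\infty$. Everything else reduces to the almost sure convergence of $\widehat{m}_\tau$ already established via Theorem \ref{t4}.
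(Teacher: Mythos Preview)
Your proposal is correct. In fact, the alternative you sketch at the end—pointwise almost sure convergence of $\widehat{G}_\tau(h)$ to $G(h)$ from Theorem \ref{t4}(i), upgraded to uniform convergence via P\'olya's lemma using the continuity of $G$—is exactly the paper's proof, essentially verbatim.

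Your primary argument via the mean value theorem is a slightly different route: instead of appealing to P\'olya, you obtain the explicit bound $\sup_{h\geq0}|\widehat{G}_\tau(h)-G(h)|\leq \tfrac{2}{em}|\widehat{m}_\tau-m|$ once $\widehat{m}_\tau\geq m/2$, and let $\tau\to\infty$. This buys a quantitative estimate (indeed, it is the same mean value computation the paper later uses in the proof of Theorem \ref{p2}), whereas the P\'olya route is softer but shorter. Both are valid and rest on the same probabilistic input, the almost sure convergence $\widehat{m}_\tau\to m$ from Theorem \ref{t4}(i).
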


\begin{proof}
From (i) in Theorem \ref{t4}, for each $h>0$, $\widehat{G}_\tau(h)\build{\longrightarrow}{}{\tau\to\infty} G(h)$, $\pp$-a.s. Hence, it follows from the P\'olya Lemma \citep{r:97} that, as $\tau \to \infty$, $\{\widehat{G}_\tau; \tau>0\}$ converges, $\pp$-a.s., uniformly to $G$, which concludes the proof.
\end{proof}

Also, a Kolmogorov type theorem is given  below.

\begin{theorem}\label{p2} Let $\tau>0$. Under assumptions and notations stated in this section, we have
$$
\sup_{h\geq0}|\sqrt{\tau}(\widehat{G}_\tau(h)-G(h))|\build{\longrightarrow}{\mathcal{D}}{\tau\to\infty}\left|{\rm N}\left(0,\frac{\exp(-2)}{m}\right)\right|,
$$
where ${\rm N}(0,\exp(-2)/m)$ is a normal random variable with mean zero and variance $\exp(-2)/m$.
\end{theorem}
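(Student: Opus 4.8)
The plan is to reduce the supremum of $|\sqrt{\tau}(\widehat G_\tau(h)-G(h))|$ over $h\ge 0$ to a simple function of the single random variable $\widehat m_\tau$, and then apply the delta method. First I would write $\widehat G_\tau(h)-G(h)=\exp(-mh)-\exp(-\widehat m_\tau h)$ and, for fixed $\tau$, analyze the function $h\mapsto \exp(-mh)-\exp(-\widehat m_\tau h)$ on $[0,\infty)$. Its derivative vanishes where $m\exp(-mh)=\widehat m_\tau\exp(-\widehat m_\tau h)$, i.e.\ at $h^*=h^*(\widehat m_\tau,m)=\dfrac{\log(\widehat m_\tau/m)}{\widehat m_\tau-m}$ (with the obvious interpretation $h^*=1/m$ when $\widehat m_\tau=m$), and one checks this is the unique interior critical point and a global extremum of the absolute value, the function being $0$ at $h=0$ and at $h=\infty$. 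Substituting back, a short computation gives the closed form
$$
\sup_{h\ge 0}\bigl|\exp(-mh)-\exp(-\widehat m_\tau h)\bigr| \;=\; \bigl|\psi(\widehat m_\tau)\bigr|,
$$
for a fixed continuously differentiable function $\psi$ with $\psi(m)=0$; indeed, evaluating at $h^*$ one finds that at the optimum $\exp(-mh^*)-\exp(-\widehat m_\tau h^*) = (\widehat m_\tau/m)^{-m/(\widehat m_\tau-m)}-(\widehat m_\tau/m)^{-\widehat m_\tau/(\widehat m_\tau-m)}$. The key point is only that $\psi$ is smooth near $m$ and $\psi(m)=0$; I would record $\psi'(m)=-\exp(-1)$, which is the computation driving the stated variance.

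Next I would invoke Corollary~\ref{c2} (equivalently part~(ii) of Theorem~\ref{t4} with the additional assumption $\lim_{t\to\infty}\sqrt t(\Lambda(t)/t-m)=0$, which holds here since $\lambda(t)=m$ for $t\ge\tau^*$ makes $\Lambda(t)/t-m=O(1/t)$): as $\tau\to\infty$,
$$
\sqrt{\tau-\tau^*}\,(\widehat m_\tau-m)\;\build{\longrightarrow}{\mathcal D}{}\;{\rm N}(0,m),
$$
and since $\sqrt{\tau}/\sqrt{\tau-\tau^*}\to 1$, the same holds with $\sqrt\tau$ in front. Then the delta method applied to the smooth map $\psi$ yields $\sqrt\tau\,\psi(\widehat m_\tau)\build{\longrightarrow}{\mathcal D}{}{\rm N}\bigl(0,\psi'(m)^2 m\bigr)={\rm N}\bigl(0,\exp(-2)m\bigr)$. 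Wait --- I should double-check against the claimed variance $\exp(-2)/m$; if the delta-method bookkeeping gives $\exp(-2)m$ then either the statement's variance should read $\exp(-2)m$ or there is a reparametrization I am missing, and I would reconcile this before finalizing. Assuming the intended variance is $\sigma^2$, the final step is: $\sqrt\tau\sup_{h\ge0}|\widehat G_\tau(h)-G(h)| = |\sqrt\tau\,\psi(\widehat m_\tau)| \build{\longrightarrow}{\mathcal D}{} |{\rm N}(0,\sigma^2)|$ by the continuous mapping theorem applied to $x\mapsto|x|$.

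The main obstacle is the first paragraph: proving rigorously that the supremum over the noncompact set $h\ge 0$ is attained at the single critical point $h^*$ and extracting its closed form uniformly in $\widehat m_\tau$. Care is needed because $\widehat m_\tau$ is random and can be larger or smaller than $m$ (and the formula for $h^*$ has a removable singularity at $\widehat m_\tau=m$); one must argue that, with probability tending to one, $\widehat m_\tau$ lies in a fixed neighborhood of $m$ on which $\psi$ is smooth, so that the delta method genuinely applies. A clean way to handle this is to note $\widehat m_\tau\to m$ a.s.\ by Theorem~\ref{t4}(i), hence the event $\{\widehat m_\tau\in(m/2,2m)\}$ has probability $\to 1$, and on that event the exact identity $\sup_{h\ge0}|\widehat G_\tau(h)-G(h)|=|\psi(\widehat m_\tau)|$ holds with $\psi$ $C^1$; Slutsky then finishes the argument. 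Everything else is a routine application of asymptotic normality of $\widehat m_\tau$, the delta method, and the continuous mapping theorem.
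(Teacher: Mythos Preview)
Your overall strategy is correct and essentially the same as the paper's: both arguments reduce $\sup_{h\ge 0}|\widehat G_\tau(h)-G(h)|$ to a constant times $|\widehat m_\tau-m|$ and then invoke the asymptotic normality of $\widehat m_\tau$ together with Slutsky. The paper's route is slightly shorter: instead of locating the exact maximizer $h^*$ and evaluating there, it applies the mean value theorem in the parameter $a\mapsto 1-\exp(-ah)$ to write $\widehat G_\tau(h)-G(h)=h\exp(-\xi_\tau h)(\widehat m_\tau-m)$ with $\xi_\tau$ between $\widehat m_\tau$ and $m$, then uses that $\sup_{h\ge 0}h\exp(-\xi_\tau h)=\exp(-1)/\xi_\tau$ with $\xi_\tau\to m$ a.s., and concludes by Slutsky. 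Your explicit-maximizer approach works equally well and has the virtue of giving an exact identity for the supremum.

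Your one genuine slip is the value of the derivative, and this is exactly what created your doubt about the variance. If you set $\phi(a)=\exp(-mh^*(a))-\exp(-ah^*(a))$ with $h^*(a)=\dfrac{\log(a/m)}{a-m}$, then by the envelope argument (the $h$-derivative vanishes at $h^*$) you get $\phi'(a)=h^*(a)\exp(-ah^*(a))$, and since $h^*(m)=1/m$ this gives $\phi'(m)=\dfrac{\exp(-1)}{m}$, not $-\exp(-1)$. The delta method then yields variance $\bigl(\exp(-1)/m\bigr)^2\cdot m=\exp(-2)/m$, in agreement with the statement. Equivalently, the first-order expansion $\exp(-mh)-\exp(-ah)\approx h\exp(-mh)(a-m)$ followed by $\sup_{h\ge 0}h\exp(-mh)=\exp(-1)/m$ shows directly that the scaling constant is $\exp(-1)/m$, which is precisely the paper's computation. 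So there is no reparametrization to chase: fix the derivative and your proof goes through with the stated variance.
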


\begin{proof}
From the intermediate value theorem, there exists $\xi_\tau$ between $\widehat{m}_\tau$ and $m$ such that
$$
\widehat{G}_\tau(h)-G(h)=h\exp(-\xi_\tau h)(m-\widehat{m}_\tau).
$$
Then,
$$
\sup_{h\geq0}|\sqrt{\tau}(\widehat{G}_\tau(h)-G(h))|=\frac{\exp(-1)}{\xi_\tau}|\sqrt{\tau}(\widehat{m}_\tau-m)|.
$$
However, from (i) and (ii) of Theorem \ref{t4}, $\lim_{\tau\to\infty}\xi_\tau=m$, $\pp$-a.s., and $\sqrt{\tau}(\widehat{m}_\tau-m)\mathop{\to}\limits^{\mathcal{D}}\,{\rm N}(0,m)$, respectively. Therefore, the proof follows from the Slutsky theorem.
\end{proof}

\section{{Simulation}}\label{section6}

Let $\{G_t\}_{t>0}$ be the family of CDFs defined in Remark \ref{r1} and $G$ be the corresponding limit CDF with $m=G'(0)>0$. In order to evaluate the  accurate of the asymptotic approximation of $G_t$  by means of $G$, we simulate $n$ data from a random variable $S$ with CDF $G_t$, for different values of $t\geq0$. By assuming the distribution of $T_k$ is given by \eqref{e3} with $\Lambda(t)=mt$ and $m>0$, we have $G_t$ is expressed as
\begin{equation}\label{e9}
  G_t(h)=1-\left(1+\frac{h}{t}\right)^{k-1}\exp(-mh).
\end{equation}
It is easy to note that, for each $t\geq k-1$, $G_t$ given in \eqref{e9} is a CDF. Our simulation study consists of (i) partitioning the positive part of the real straight line in $r$ subintervals, which are determined as $0=h_0<h_1<\cdots<h_{r-1}< h_r=\infty$, (ii) determining the observed percentage of times that the $n$ simulated values of $S$ fall into each subinterval $[h_{i-1},h_i[$ and (iii) computing the corresponding expected percentage given by $100\times(G(h_i)-G(h_{i-1}))$, for $i=1,\dots,r-1$, and by $100\times(1-G(h_{r-1}))$, for $i=r$. From the probability integral transform, $G(S)$ follows a uniform distribution in the interval $[0, 1]$. Hence, from (i) in Theorem \ref{t1}, we expect, for large enough values of $t$, $G_t(S)$ to have approximately a uniform distribution. The simulation is carried out with $n=1000$ and $r = 10$ class intervals, because it is coherent with the Sturges rule; see \cite{St26}. This rule indicates that a suitable number of class intervals is $r =  1 + \log_2(n)$, which in our case is $r =  1 + \log_2(1000) \approx 10$. We expect that, as $t$ increases, percentages of simulated and expected  values must be similar. A goodness-of-fit $\chi^2$ test is used to evaluate this similarity. Indeed, based on such a test, our objective is to determinate values of $t$ for which the approximation provided in Theorem \ref{t2} is satisfactory. Specifically, the scenario of the simulation study considers $m = 1$ and $k= 10$.

The random variable $S$ is simulated $1000$ times, with $r = 10$, where $h_1,\dots,h_9$ are chosen in such a way that $G(h_i)=i/10$, that is,
$
h_1=0.1053605, h_2=0.2231436, h_3=0.3566749, h_4=0.5108256, h_5=0.6931472, h_6=0.9162907, h_7=1.2039728, h_8=1.6094379$ and $h_9=2.3025851$.
Then, the observed percentages falling into these subintervals are determined; see Table \ref{tab:1}. The expected percentages are all $10\%$.

\begin{table}[h!]\label{tab:1}
\centering
\renewcommand{\arraystretch}{1}
\renewcommand{\tabcolsep}{0.09cm}
\caption{
percentages of simulated values of $S$ for the indicated time $t$ and p-values of the corresponding $\chi^2$ test ($m = 1$ and $k=10$).}
\begin{tabular}{c ccccccccccc}
\hline
&\multicolumn{11}{c}{$h$}\\
$t$&$[h_0,h_1[$&$[h_1,h_2[$&$[h_2,h_3[$&$[h_3,h_4[$&$[h_4,h_5[$&
$[h_5,h_6[$&$[h_6,h_7[$&$[h_7,h_8[$&$[h_8,h_9[$&$[h_9,h_{10}]$&p-value\\
\hline
10 & 1.1 &  1.7  &  1.2  &  2.0  &  1.4  &  3.4  &  4.9   &  7.4  &  12.0 &  64.9  &$<$ 0.001\\
20 &5.0  &  6.3  &  6.8  &  6.4  &  6.8  &  9.2  &  10.6  &  10.1 &  13.4 &  25.4  &$<$ 0.001\\
25 &6.7  &  6.4  &  7.0  &  8.3  &  7.4  &  9.2  &  10.1  &  11.8 &  12.5 &  20.6  &0.057\\
30 &7.5  &  9.0  &  8.4  &  7.4  &  7.7  &  7.5  &  9.9   &  10.1 &  13.0 &  19.5  &0.175\\
40 &7.3  &  8.9  &  9.4  &  9.2  &  9.6  &  8.7  &  11.7  &  8.2  &  11.1 &  15.9 &0.803\\
50 &10.3 &  9.7  &  8.2  &  8.8  &  9.3  & 11.5  &  8.9   &  9.7  &  11.1 &  12.5 &0.996\\
\hline
\end{tabular}
\end{table}

\newpage
As mentioned,  closeness of $10\%$ is evaluated by using the $\chi^2$ test. For this purpose, we define the statistic
$$
\chi^2 = \sum_{j=1}^{10}(O_j-10)^2 \sim \chi^2(9),
$$
where, for $j = 1,\dots,10$, $O_j$ is the observed value of $S$ falling in the $j$th interval. For each $i\in\{1,\dots,6\}$, we define
$$
\chi^2_i = \sum_{j=1}^{10}(A_{ij}-10)^2,
$$
where
$$
(A_{ij})=
\left(
  \begin{array}{cccccccccc}
 1.1  &  1.7  &  1.2  &  2.  &  1.4  &  3.4  &  4.9  &  7.4  &  12  &  64.9 \\
5.  &  6.3  &  6.8  &  6.4  &  6.8  &  9.2  &  10.6  &  10.1 &   13.4 &  25.4  \\
6.7  &  6.4  &  7.  &  8.3  &  7.4  &  9.2  &  10.1  &  11.8  &  12.5  &  20.6\\
 7.5  &  9.  &  8.4  &  7.4  &  7.7  &  7.5  &  9.9  &  10.1  &  13.  &  19.5 \\
7.3  &  8.9  &  9.4  &  9.2  &  9.6  &  8.7  &  11.7  &  8.2  &  11.1  &  15.9\\
10.3  &  9.7  &  8.2  &  8.8  &  9.3  &  11.5  &  8.9  &  9.7  &  11.1  &  12.5\\
  \end{array}
\right),
$$
for $1\leq i\leq 6$ and $1\leq j\leq 10$. The p-value corresponding to the $i$th row of Table \ref{tab:1} is obtained as $\pp(\chi^2\geq\chi^2_i)$. Thus, the $\chi^2$ test allows us to evaluate the goodness-of-fit of $G_t$ by means of the limit CDF $G$; see Table \ref{tab:1}. From this table, we conclude that, for $t\geq25$, the distribution of $G_t$ is well approximated by $G$.

\section{Data analysis}\label{section7}

Chile is a country with a high seismic activity and enough data have been registered about this activity. For this reason, we choose this country to apply our results by means of a data analysis. Specifically, we study the seismic activity in the north zone of Chile, which  is known by some seismologists as Area A. In this zone, 39 earthquakes were registered between the years 1604 and 2007, whose magnitudes fluctuate between 7.0 and 8.9 Richter degrees ($^\circ$R). All of these earthquakes with their respective dates of occurrence are described in Table \ref{tab:data}.

\textcolor{red}{
\begin{table}[h!]
\centering
\footnotesize
\renewcommand{\tabcolsep}{0.03cm}
\caption{year and Richter degree for data of earthquakes in the Area A of the north zone of Chile.}\label{tab:data}
{
\begin{tabular}{ccccccccccccccccccccccc}
\hline
Year&1604&1615&1681&1715&1768&1831&1833&1836&1868&1870&1871&1876&1877&1878&1905&1906&1906&1909&1911&1925\\
$^\circ$R&8.5*&7.5&7.4&8.8*&7.7&7.6&7.8&7.5&8.0&7.5&7.5&7.2&8.6*&7.3&7.0&7.2&7.0&7.6&7.3&7.3\\
Year&1928&1933&1936&1940&1945&1947&1948&1953&1956&1965&1966&1967&1970&1983&1987&1988&1995&2005&2007&$\to$\\
$^\circ$R&7.1&7.6&7.3&7.3&7.2&7.0&7.0&7.5&7.1&7.1&7.9&7.5&7.0&7.4&7.2&7.0&7.6&7.9&7.6&\\
\hline
\end{tabular}
}
\end{table}}

\newpage
According to considerations in \cite{GR44}, large earthquakes can be classified into two groups. The first of them (G1) corresponds to destructive large earthquakes of medium intensity fluctuating between 7.0 $^\circ$R and 8.4 C. The second of these groups (G2) corresponds to the largest recorded earthquakes with intensity within a range starting at 8.5 $^\circ$R and having no upper limit. We detect 36 and 3 large earthquakes belong to G1 and G2, respectively. The times when any of the three large earthquakes from G2 occur are considered as zero. The times of occurrence of large earthquakes from G1 until a large earthquake from G2 occurs are registered in Tables \ref{tab:3}, \ref{tab:4} and \ref{tab:5}, which we call first, second and third data set, respectively. The first data set is not be considered in this analysis due to it contains too few information; see Table \ref{tab:3}. The corresponding estimated asymptotic slope of the CIF $\widehat{m}_t$ for each of these data sets is also given in Tables \ref{tab:3}, \ref{tab:4} and \ref{tab:5}. For $t=53,116,118,121$ provided in Table \ref{tab:4}, the empirical CDF $\widehat{F}_t$ associated with $F_t$ is given in Table \ref{tab:6}. In this table, we compare numerically these empirical CDFs with the corresponding random CDF.

\begin{table}[h!]
\centering
\small
\renewcommand{\arraystretch}{1.25}
\caption{first data set.}\label{tab:3}
\begin{tabular}{cccc}\hline
$t$&0&11&77\\\hline
$\widehat{m}_t$&0&$\frac{1}{11}$&$\frac{2}{77}$\\\hline
$^\circ$R&8.5*&7.5&7.4\\\hline
\end{tabular}
\end{table}

\begin{table}[h!]
\centering
\small
\renewcommand{\arraystretch}{1.25}
\caption{second data set.}\label{tab:4}
\vspace{0.1cm}
\begin{tabular}{cccccccccc}\hline
$t$&0&53&116&118&121&153&155&156&161\\\hline
$\widehat{m}_t$&0&$\frac{1}{53}$&$\frac{2}{116}$&$\frac{3}{118}$&$\frac{4}{121}$&$\frac{5}{153}$&$\frac{6}{155}$&$\frac{7}{156}$&$\frac{8}{161}$\\\hline
$^\circ$R&8.8*&7.7&7.6&7.8&7.5&8.0&7.5&7.5&7.2\\\hline
\end{tabular}
\end{table}

\begin{table}[h!]
\centering
\small
\renewcommand{\arraystretch}{1.25}
\renewcommand{\tabcolsep}{0.03cm}
\caption{third data set.}\label{tab:5}
\vspace{0.1cm}
{
\begin{tabular}{ccccccccccccccccccccccccccccc}\hline
$t$&0&1&28&29&29&32&34&48&51&56&59&63&68&70&71&76&79&88&89&90&93&106&110&111&118&128&130&$\to$\\\hline
$\widehat{m}_t$&0&1&$\frac{2}{28}$&$\frac{3}{29}$&$\frac{4}{29}$&$\frac{5}{32}$&$\frac{6}{34}$&$\frac{7}{48}$&
$\frac{8}{51}$&$\frac{9}{56}$&$\frac{10}{59}$&$\frac{11}{63}$&$\frac{12}{68}$&$\frac{13}{70}$&
$\frac{14}{71}$&$\frac{15}{76}$&$\frac{16}{79}$&$\frac{17}{88}$&$\frac{18}{89}$&$\frac{19}{90}$&$\frac{20}{93}$&
$\frac{21}{106}$&$\frac{22}{110}$&$\frac{23}{111}$&$\frac{24}{118}$&$\frac{25}{128}$&$\frac{26}{130}$&\\\hline
$^\circ$R&8.6*&7.3&7.0&7.2&7.0&7.6&7.3&7.3&7.1&7.6&7.3&7.3&7.2&7.0&7.0&7.5&7.1&7.1&7.9&7.5&7.0&7.4&7.2&7.0&7.6&7.9&7.6&\\\hline
\end{tabular}
}
\end{table}

\begin{table}[h!]
\centering
\footnotesize
\renewcommand{\arraystretch}{0.8}
\renewcommand{\tabcolsep}{0.2cm}
\caption{\small values of empirical and estimated CDFs and their differences for the indicated time.}\label{tab:6}
\begin{tabular}{cccccccccc}\hline\\[-0.35cm]
$h$&0&63&65&68&100&102&103&108&109\\[-0.15cm]
$\widehat{F}_{53}(h)$&0/8&1/8&2/8&3/8&4/8&5/8&6/8&7/8&8/8\\[0.05cm]
$\widehat{G}_{53}(h)$&0&0.70&0.71&0.72&0.85&0.86&0.86&0.87&0.88\\[-0.05cm]
$|\widehat{G}_{53}(h)-\widehat{F}_{53}(h)|$&0&\textbf{0.57}&0.45&0.35&0.35&0.24&0.11&0.01&0.12\\\hline
$h$&0&2&5&37&39&40&45&46\\[-0.15cm]
$\widehat{F}_{116}(h)$&0/7&1/7&2/7&3/7&4/7&5/7&6/7&7/7\\[0.05cm]
$\widehat{G}_{116}(h)$&0&0.66&0.67&0.69&0.82&0.83&0.83&0.84\\[-0.05cm]
$|\widehat{G}_{116}(h)-\widehat{F}_{116}(h)|$&0&\textbf{0.52}&0.39&0.26&0.25&0.03&0.16\\\hline
$h$&0&3&35&37&38&43&44\\[-0.15cm]
$\widehat{F}_{118}(h)$&0/6&1/6&2/6&3/6&4/6&5/6&6/6\\[0.05cm]
$\widehat{G}_{118}(h)$&0&0.80&0.81&0.82&0.92&0.93&0.93\\[-0.05cm]
$|\widehat{G}_{118}(h)-\widehat{F}_{118}(h)|$&0&\textbf{0.63}&0.48&0.32&0.25&0.09&0.07\\\hline
$h$&0&32&34&35&40&41\\[-0.15cm]
$\widehat{F}_{121}(h)$&0/5&1/5&2/5&3/5&4/5&5/5\\[0.05cm]
$\widehat{G}_{121}(h)$&0&0.88&0.88&0.89&0.96&0.97\\[-0.05cm]
$|\widehat{G}_{121}(h)-\widehat{F}_{121}(h)|$&0&\textbf{0.68}&0.48&0.29&0.16&0.03\\\hline
\end{tabular}
\end{table}

Because we are unable to known when the next earthquake will occur, it is not possible to calculate an empirical CDF associated with any $F_t$. However, from Theorems \ref{p1} and \ref{p2}, the random CDF $\widehat{G}_t$ permits us to estimate this CDF. In addition, as illustrated next, Corollary \ref{c2} allows us to find approximate confidence bands for the unknown limit CDF $G$. Let

{
\begin{equation}\label{e10}
  [m^-_{t,\alpha}, m^+_{t,\alpha}] = \frac{1}{2}\left(\frac{x_\alpha^2}{t}+2\widehat{m}_t\pm\frac{x_\alpha}{\sqrt{t}}\sqrt{\frac{x_\alpha^2}{t}+4\widehat{m}_t}\right),
\quad 0<\alpha< 1,
\end{equation}
be an $(1-\alpha)\times 100\%$ confidence interval for $m$, with $t$ fixed. Hence, by choosing $x_\alpha$ such that $\pp(|\sqrt{{t}/m}(\widehat{m}_t-m)|\leq x_\alpha)=1-\alpha$, we have $[m^-_{t,\alpha}, m^+_{t,\alpha}]$ given in \eqref{e10} is an $(1-\alpha)\times 100\%$ confidence interval for $m$, with $t$ fixed. Consequently,
$$
\pp\Big(\bigcap_{h>0}\{G^-_{t,\alpha}(h)\leq G(h)\leq G^+_{t,\alpha}(h)\}\Big)=1-\alpha,
$$
where $G^-_{t,\alpha}(h)$ and $G^+_{t,\alpha}(h)$ are the CDFs corresponding to the exponential distribution with parameters $m^-_{t,\alpha}$ and $m^+_{t,\alpha}$, respectively, that is, $G^-_{t,\alpha}(h)=1-\exp(-m^-_{t,\alpha}h)$ and $G^+_{t,\alpha}(h)=1-\exp(- m^+_{t,\alpha} h)$, for $h\geq0$. Thus, $G^-_{t,\alpha}$ (lower band) and $G^+_{t,\alpha}$ (upper band) are $100\times (1-\alpha)\%$ confidence bands for the limit CDF $G$ over $h >0$, with $t$ fixed.
From Corollary \ref{c2}, for $\alpha=0.05$, we have $x_\alpha=1.96$. For this value of $\alpha$, $t=130$ and the data in Table \ref{tab:5}, we plot 95\% confidence bands for $G$ in Figure \ref{fig:2}, from which is possible to note that, within ten years more, one has a high probability of occurrence for a high intensity earthquake, whereas this probability is practically one within twenty years more.

\begin{figure}[h!]
\vspace{-1cm}
\centering
  \includegraphics[width=7cm,height=7cm]{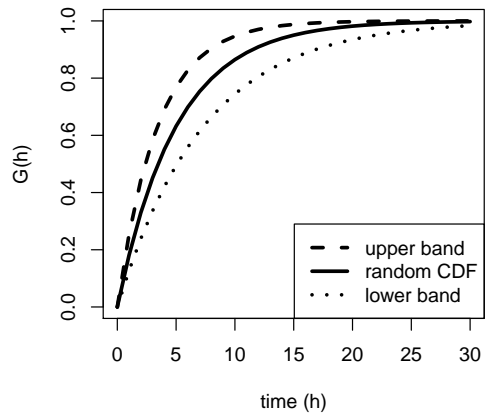}
\vspace{-0.65cm}
  \caption{95\% confidence bands for $G$ with $t=130$ for Chilean large earthquake data.}\label{fig:2}
\end{figure}

\section{Conclusions}\label{section8}

In this paper, we have proven that, by assuming large earthquakes occur after approximately $k-1$ seismic events of small intensity, the distribution of the occurrence time for the next large earthquake, by knowing the last seismic event occurred a long time ago, is exponential with rate depending on the asymptotic slope of the cumulative intensity function corresponding to a non-homogeneous Poisson process, which does not depend on $k$. We conclude that, for large values of $\tau$, it is advisable to estimate $m$, the asymptotic slope of the cumulative function, by $\widehat{m}_\tau=N_\tau/\tau$, where $N_\tau$ corresponds to the number of seismic events occurred in $[0, \tau]$. We have seen this estimator is consistent for $m$ and, in a number of cases, it turns out be a maximum likelihood estimator. Moreover, by means of $\widehat{m}_\tau$, a random cumulative distribution function is defined and it is proved that it satisfies results similar to the Glivenko-Cantelli and Kolmogorov theorems. Simulations carried out for $m=1$, $k=10$ and a p-value equals 0.057 suggested us that, for $\tau\geq25$ years, the random cumulative distribution function $\widehat{G}_\tau$ is a good approximation for $G$, the limit cumulative distribution function. Because it is not possible to known when the next earthquake will occur, an empirical distribution function for the waiting time of the next earthquake cannot be evaluated. However, what we have defined as the limit cumulative distribution function, along with suitable confidence bands for the unknown cumulative distribution function, has provided additional instruments to alert on an eventual large earthquake. Finally, a real-world data analysis has enabled us to illustrate the potential applications of our proposal.

\section*{Acknowledgements}

The authors gratefully acknowledge financial support from FONDECYT 1120879 grant of CONICYT-Chile.



\end{document}